\documentclass[11pt]{article}
\usepackage[margin=1.02in]{geometry}
\usepackage{amsmath,amssymb,amsthm,mathtools}
\usepackage{microtype}
\usepackage{enumitem}
\usepackage[colorlinks=true,linkcolor=black,citecolor=blue,urlcolor=blue]{hyperref}
\usepackage{tikz}
\usetikzlibrary{arrows.meta,calc,decorations.pathmorphing}
\definecolor{figblue}{RGB}{35,105,180}
\definecolor{figred}{RGB}{200,55,55}
\definecolor{figgreen}{RGB}{35,145,90}
\usetikzlibrary{arrows.meta,positioning}
\setlist{nosep}
\newtheorem{theorem}{Theorem}[section]
\newtheorem{proposition}[theorem]{Proposition}
\newtheorem{corollary}[theorem]{Corollary}

\theoremstyle{remark}
\newtheorem{remark}[theorem]{Remark}
\newtheorem{example}[theorem]{Example}
\newtheorem{definition}[theorem]{Definition}
\newcommand{\Z}{\mathbb Z}

\newcommand{\SympSum}{\mathbin{\#}}
\title{Symplectic Surface Summing and Negative Spheres}
\author{Anar Akhmedov}
\date{}

\begin{document}
\maketitle

\begin{abstract}
\noindent We extend the sphere-summing construction of \cite{AZ} from torus sums of spheres to symplectic sums along surfaces of arbitrary genus. The relative surfaces may have arbitrary positive intersection number with the summing surface. We give formulas for the genus and self-intersection of the resulting surface, together with graph and simultaneous versions of the construction. The original elliptic-surface case is recovered as a special case, and higher-genus examples are obtained from hyperelliptic Lefschetz fibrations and braided symplectic surfaces in $\Sigma_h\times S^2$.
\end{abstract}

\section{Introduction}

Our starting point is the \emph{summing spheres} construction from our joint
work with Weiyi Zhang \cite{AZ}, which dates back to 2012.  In \cite{AZ}, several symplectic spheres
$S_1,\ldots,S_p$, each meeting a square-zero symplectic torus $T$ once, are
joined using a fiber sum with a connected $p$-fold braided torus in
$\mathbb{T}^2\times \mathbb{S}^2$. It was shown that the result is a symplectic sphere of square
\[
        S_1^2+\cdots+S_p^2.
\]
For the particular braid and gluing used in \cite{AZ}, the construction also produces symplectic $4$-manifolds with cyclic fundamental group and negative spheres suitable for rational blowdown. Combined with Luttinger surgery and symplectic sums, this gave examples with $b^+=1$ and small $c_1^2$. The article is available at \href{https://arxiv.org/abs/1506.08367}{\texttt{arXiv:1506.08367}}.

Neither restriction is essential. The relative surfaces may have positive genus and may meet the summing surface more than once; the summing surface itself may also have arbitrary genus.  Let $\Sigma\subset X$ and $\Sigma'\subset Z$ be
symplectic surfaces of the same genus, with opposite self-intersection, and
assume that their symplectic forms have been adjusted so that the symplectic
sum is defined.  Suppose that $C_1,\ldots,C_r\subset X$ are embedded
symplectic surfaces, pairwise disjoint away from $\Sigma$, with
\[
        C_i\cdot\Sigma=d_i>0,\qquad m=\sum_{i=1}^{r}d_i,
\]
and that an embedded symplectic surface $F\subset Z$ satisfies
$F\cdot\Sigma'=m$.  With a surface-compatible choice of the symplectic-sum
gluing, the punctured surfaces match to a connected embedded symplectic
surface $C\subset X\#_{\Sigma=\Sigma'}Z$.  The self-intersection and genus are
\[
 C^2=F^2+\sum_{i=1}^r C_i^2,\qquad
 g(C)=g(F)+\sum_{i=1}^r g(C_i)+m-r.
\]
There is also a useful graph formulation: vertices represent the relative surfaces and edges the matched boundary circles.  For a connected gluing graph $\Gamma$,
\[
        g(C_\Gamma)=\sum_{v\in V(\Gamma)}g(C_v)+b_1(\Gamma).
\]
Thus each independent cycle contributes one to the genus. The same construction may be performed simultaneously on disjoint collections of relative surfaces; this will be useful for the rational blowdown configurations below.

Related constructions appear in our work with Burak Ozbagci \cite{AO} and N.~Saglam \cite{ASaglam}. In \cite{AO}, knot surgery was incorporated into Lefschetz fibrations with arbitrary finitely presentable fundamental group. In \cite{ASaglam}, Luttinger surgery on Lefschetz fibrations was used to construct small exotic symplectic $4$-manifolds. The exceptional sections arising in these constructions are also relevant here; see the recent work with S.~Sakall{\i} \cite{ASGurtas}.

The idea of the symplectic normal connected sum goes back to Gromov's
work \cite[Section~3.4.4]{Gromov}; see also McCarthy--Wolfson \cite{MW2},
where this attribution is made.  The construction was subsequently
developed and used by Gompf \cite{Gompf} and by McCarthy--Wolfson
\cite{MW}.
Fintushel and Stern later used closed braids in a tubular neighborhood of a
torus to construct connected symplectic representatives of multiples of a
torus class \cite{FSsurf}.  Smith later
developed related constructions from surface fibrations, including
higher-genus symplectic representatives \cite{SmithSurface} in ruled surfaces.

\par\vspace{0.8\baselineskip}
\noindent\textbf{Acknowledgment.}\enspace The sphere-summing construction first appeared in joint work with Weiyi
Zhang in 2012. I am grateful to Weiyi Zhang, Burak Ozbagci, and S\"umeyra
Sakall{\i} for the collaborations discussed above \cite{AZ,AO,ASGurtas}.
Several of the ideas in this note go back to earlier work and are taken up here again after an interruption. Discussions arising from my collaboration with Sakall{\i} on
G\"urta\c{s} Lefschetz fibrations helped bring me back to these questions. An LLM-based tool was used to assist with the preparation, including grammar and language editing, and with the generation of figures. All mathematical content is due to the author, who takes full intellectual responsibility for the content of this paper.
\section{The relative surface-summing construction}
Let $\Sigma\subset X$ and $\Sigma'\subset Z$ be closed symplectic surfaces
of the same genus, with opposite self-intersection, and assume that their
symplectic forms have been adjusted so that the symplectic sum is defined.
Thus
\[
                    \Sigma^2=-(\Sigma')^2.
\]
The symplectic normal connect sum \cite{MW,Gompf} identifies the oriented normal circle bundles by a
fiber-orientation-reversing bundle isomorphism and produces
\[
                    Y=X\SympSum_{\Sigma=\Sigma'}Z.
\]
The square-zero torus case used in \cite{AZ} is recovered by taking
$\Sigma=\Sigma'=T^2$.
Let $C_1,\ldots,C_r\subset X$ be embedded symplectic surfaces such that:
\begin{enumerate}
\item $C_i$ and $C_j$ are disjoint for $i\ne j$ away from $\Sigma$;
\item each $C_i$ meets $\Sigma$ transversely and positively in $d_i\geq1$
points;
\item all points in the sets $C_i\cap\Sigma$ are distinct.
\end{enumerate}
Let $F\subset Z$ be an embedded symplectic surface meeting $\Sigma'$
transversely and positively in
\[
                    m=\sum_{i=1}^r d_i
\]
distinct points.
Deleting the tubular neighborhoods of $\Sigma$ and $\Sigma'$ removes a small
disk from the corresponding relative surface at each intersection point.
Near such a point the two symplectic surfaces may be put in the standard
positive transverse local model.  The boundary circle of each punctured
surface then carries the normal framing coming from this local product model.
\begin{definition}
A symplectic-sum gluing is called \emph{surface-compatible} for the chosen
matching if it sends the boundary circle associated to each point
$C_i\cap\Sigma$ to the prescribed boundary circle associated to
$F\cap\Sigma'$, with the induced orientations and normal framings matched.
\end{definition}
We shall always assume that the gluing is surface-compatible. To arrange this, choose an orientation-preserving diffeomorphism $\Sigma\to\Sigma'$ carrying the marked points to their prescribed partners. Since the normal bundles have opposite Euler classes, it lifts to a fiber-orientation-reversing isomorphism of their circle bundles. Local fiber rotations near the marked points can then be used to match the product framings.
The resulting closed
surface is denoted
\[
                    C=C_1\#\cdots\# C_r\# F.
\]
\begin{theorem}[Surface-summing theorem]\label{thm:main}
With the hypotheses above and a surface-compatible gluing, the symplectic
sum $Y=X\SympSum_{\Sigma=\Sigma'}Z$ contains an embedded connected symplectic
surface $C$
such that
\begin{align}
 g(C)&=g(F)+\sum_{i=1}^r g(C_i)+m-r,\label{eq:genus}\\
 C^2&=F^2+\sum_{i=1}^r C_i^2.\label{eq:square}
\end{align}
The construction is local near the surfaces and is compatible with any further
symplectic surfaces or configurations disjoint from the summing neighborhoods.
\end{theorem}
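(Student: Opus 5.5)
The plan has four parts: build $C$ inside $Y$, check it is symplectic and connected, compute its genus by an Euler characteristic count, and compute $C^2$ by pairing $C$ with a pushoff.

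\textbf{Construction and symplecticity.} By the local model at each intersection point, we may isotope each $C_i$ and $F$, through symplectic surfaces and rel the complement of small balls, so that inside a tubular neighborhood $\nu\Sigma\cong\Sigma\times D^2$ each $C_i$ is a union of $d_i$ normal disks $\{p\}\times D^2$, and similarly for $F$ near $\Sigma'$. Removing the open neighborhoods leaves punctured surfaces
\[
C_i^\circ=C_i\setminus\nu\Sigma,\qquad F^\circ=F\setminus\nu\Sigma'.
\]
Their boundaries are $m$ fibers of the circle bundles $\partial\nu\Sigma$ and $\partial\nu\Sigma'$, and these fibers sit over the marked points. A surface-compatible gluing carries the circle of $C_i^\circ$ over $p$ onto the circle of $F^\circ$ over the partner point. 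It reverses the fiber orientation, which is exactly what is needed so that the boundary orientations of the two pieces are opposite and the union is an oriented closed surface. Because the gluing matches normal framings, the union is smoothly embedded. In Gompf's construction the symplectic form on the collar is built from the two normal disk bundle models. Since both pieces are normal disks in these models, the glued surface is a union of fibers of the collar projection, which are symplectic. This is the step I expect to need the most care: one must verify, from the explicit form Gompf and McCarthy--Wolfson use in the neck, that normal-disk fibers stay symplectic. If that is awkward, I would instead do a small symplectic isotopy of $C$ near the neck, using that symplecticity is an open condition and that the pieces are symplectic away from the neck.

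\textbf{Connectedness and genus.} Form a graph with one vertex for $F$, one vertex for each $C_i$, and an edge for each glued circle. Each $C_i$ has $d_i\ge 1$ edges to $F$, so the graph is connected. Hence $C$ is connected, because each $C_i^\circ$ and $F^\circ$ is a connected punctured surface. Gluing along circles adds Euler characteristics, so
\[
\chi(C)=\bigl(2-2g(F)-m\bigr)+\sum_{i=1}^r\bigl(2-2g(C_i)-d_i\bigr).
\]
Using $\sum_i d_i=m$, this equals $2(r+1)-2g(F)-2\sum_i g(C_i)-2m$. Setting this equal to $2-2g(C)$ gives \eqref{eq:genus}. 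This computation is also the $b_1$ formula for the graph, with $b_1=m-r$.

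\textbf{Self-intersection.} Take a pushoff $C_i'$ of $C_i$ that inside $\nu\Sigma$ consists of the parallel normal disks $\{p'\}\times D^2$, with $p'$ near $p$. Away from $\Sigma$, this pushoff is a generic section of the normal bundle of $C_i$, so all $C_i^2$ intersections can be arranged to lie outside $\nu\Sigma$. This is possible because the framing on each boundary circle is the product framing, and the relative Euler number of $C_i^\circ$ with respect to that framing equals $C_i^2$. Do the same for $F$, and choose the partner points $p'$ compatibly under the diffeomorphism $\Sigma\to\Sigma'$. The pushoffs then glue to a pushoff $C'$ of $C$ whose intersections with $C$ lie away from the neck. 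Counting them gives \eqref{eq:square}.

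\textbf{Locality.} Every modification takes place in $\nu\Sigma$, $\nu\Sigma'$ and small neighborhoods of the $C_i$ and $F$. Hence any symplectic surfaces or configurations disjoint from these neighborhoods survive unchanged in $Y$.
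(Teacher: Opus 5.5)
Your proposal is correct and follows essentially the same route as the paper. You put the $C_i$ and $F$ in the standard transverse model so they are normal disks near $\Sigma$ and $\Sigma'$, glue the punctured pieces across the neck (connected because each $C_i$ is joined to $F$), get the genus from the Euler characteristic count, and get $C^2$ from pushoffs that are parallel normal disks near the boundary circles, so the relative Euler numbers $C_i^2$ and $F^2$ add. You are more explicit than the paper about orientations and about why the neck fibers stay symplectic, which is the right justification; your openness fallback would not by itself repair a non-symplectic region, but you do not need it.
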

\begin{proof}
Choose symplectic tubular neighborhoods at the intersection points. After a local isotopy, $(\Sigma,C_i)$ and $(\Sigma',F)$ may be taken to be the coordinate axes in the standard positive transverse model. The surface-compatible gluing identifies the corresponding boundary circles and framings, and the usual smoothing joins the punctured pieces to an embedded symplectic surface. It is connected because every $C_i$ is joined to $F$.
For the genus, write $g_i=g(C_i)$ and $h=g(F)$.  Removing $d_i$ open disks
from $C_i$ gives Euler characteristic $2-2g_i-d_i$, while removing $m$ disks
from $F$ gives $2-2h-m$.  Gluing along circles does not change Euler
characteristic.  Hence
\[
 \chi(C)=2-2h-m+\sum_{i=1}^r(2-2g_i-d_i)
        =2+2r-2h-2\sum_i g_i-2m.
\]
Since $C$ is connected, $\chi(C)=2-2g(C)$, which gives
\eqref{eq:genus}.
For the square, use the relative normal framings coming from the local product models. These framings extend over the deleted disks, so the relative normal Euler numbers of the punctured $C_i$ and $F$ are $C_i^2$ and $F^2$. Normal push-offs can be chosen constant near the boundary and hence glue across the necks. The necks contribute zero, which gives \eqref{eq:square}.
\end{proof}
\begin{remark}

The theorem only determines the resulting surface; it does not determine the fundamental group of the ambient fiber sum.  The fundamental group
of $Y$ depends on the complements of $\Sigma$ and $\Sigma'$ and on the
gluing map.  In the torus case it can also be sensitive to the chosen braided
torus.
\end{remark}
\section{The graph formula and simultaneous summing} 
Theorem~\ref{thm:main} has the following graph formulation.
Let $\Gamma$ have vertices corresponding to $F,C_1,\ldots,C_r$, and an edge
for each pair of boundary circles glued together.  In the situation above,
$\Gamma$ has $r+1$ vertices and $m$ edges.
\begin{proposition}[Graph genus formula]\label{prop:graph}
Consider an iterated sequence of symplectic sums in which embedded symplectic
surfaces are joined only by surface-compatible gluings as above.  If the associated
gluing graph $\Gamma$ is connected, the resulting connected surface
$C_\Gamma$ satisfies
\[
 g(C_\Gamma)=\sum_{v\in V(\Gamma)}g(C_v)+b_1(\Gamma).
\]
If the normal framings are compatible at every gluing, then
\[
 C_\Gamma^2=\sum_{v\in V(\Gamma)} C_v^2.
\]
\end{proposition}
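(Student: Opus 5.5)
The plan is to repeat the Euler characteristic argument from the proof of Theorem~\ref{thm:main}, now organized by the graph $\Gamma$ rather than by the star-shaped configuration $F,C_1,\dots,C_r$.

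The first step is to describe $C_\Gamma$ as a decomposition. Each vertex $v$ contributes the surface $C_v$ with one open disk removed for each half-edge at $v$. By this I mean one disk for each point of $C_v$ that meets a summing surface and gets matched. Write $\deg(v)$ for the number of such points. Each edge $e$ corresponds to one pair of boundary circles, which are identified by the surface-compatible gluing, and after the local smoothing from the proof of Theorem~\ref{thm:main} this gives a neck. Since every sum in the sequence is performed along disjoint summing neighborhoods, the pieces and necks assemble to an embedded closed symplectic surface. It is connected precisely when $\Gamma$ is connected, because a path in $\Gamma$ gives a path through the corresponding punctured pieces and necks. The point to check is that later sums in the sequence do not disturb necks created earlier. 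This holds because Theorem~\ref{thm:main} is local and compatible with configurations disjoint from the summing neighborhoods.

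Next comes the Euler characteristic count. The punctured vertex surface has $\chi = 2-2g(C_v)-\deg(v)$, and gluing along circles adds nothing. Since each edge has two ends, $\sum_v \deg(v)=2|E(\Gamma)|$. This gives
\[
\chi(C_\Gamma)=2|V(\Gamma)|-2\sum_v g(C_v)-2|E(\Gamma)|.
\]
Because $\Gamma$ is connected, $b_1(\Gamma)=|E|-|V|+1$. Setting $\chi(C_\Gamma)=2-2g(C_\Gamma)$ then yields the genus formula. As a check, in the situation of Theorem~\ref{thm:main} we have $|V|=r+1$ and $|E|=m$, so $b_1=m-r$, which recovers \eqref{eq:genus}. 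Loops and multiple edges cause no trouble: a loop arises if $C_v$ meets a summing surface on both sides of the same sum, a multiple edge if $C_v$ meets $C_w$ several times, and in both cases the count above is unchanged.

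For the square, I would argue as in Theorem~\ref{thm:main}. On each punctured piece take a normal push-off that is constant with respect to the local product framing near the boundary circles. Its relative Euler number equals $C_v^2$, since that framing extends over the deleted disks. Compatibility of framings at each gluing means these push-offs match across every neck, and each neck contributes zero. Summing over vertices gives $C_\Gamma^2=\sum_v C_v^2$.

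The main obstacle is bookkeeping in the iterated setting. A surface produced by one sum may itself become a vertex-side participant in a later sum. Either one treats it as a single vertex, in which case its genus already includes an earlier $b_1$ contribution, or one expands it into its constituent pieces. I would handle this by always taking $\Gamma$ to be the fully expanded graph whose vertices are the original surfaces. Additivity of $\chi$ over the decomposition makes the answer independent of the order in which the sums are performed.
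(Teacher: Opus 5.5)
Your proposal is correct and follows the paper's proof essentially verbatim. You puncture each vertex surface once per half-edge so that the total $\chi$ drops by $2|E(\Gamma)|$, note that gluing along circles leaves $\chi$ unchanged, use $b_1(\Gamma)=|E|-|V|+1$ for connected $\Gamma$, and get the square from the same relative normal-Euler-number argument as in Theorem~\ref{thm:main}. Your extra bookkeeping (counting half-edges so loops are handled, and expanding iterated sums to the graph whose vertices are the original surfaces) only makes explicit what the paper's short proof leaves implicit.
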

\begin{proof}
If $\Gamma$ has $V$ vertices and $E$ edges, puncturing the vertex surfaces once
for each incident edge decreases the sum of their Euler characteristics by
$2E$.  Gluing the resulting boundary circles changes no Euler characteristic.
Thus
\[
 2-2g(C_\Gamma)=\sum_v(2-2g(C_v))-2E.
\]
Since $\Gamma$ is connected, $b_1(\Gamma)=E-V+1$, and the genus formula follows.
The square formula is the same normal-Euler-number argument used in
Theorem~\ref{thm:main}.
\end{proof}
\begin{center}
\begin{tikzpicture}[scale=.85, every node/.style={font=\small}]
\node[circle,draw=figgreen,very thick,fill=figgreen!8,minimum size=8mm] (F) at (0,0) {$F$};
\node[circle,draw=figred,very thick,fill=figred!7,minimum size=8mm] (C1) at (-3,1.4) {$C_1$};
\node[circle,draw=figred,very thick,fill=figred!7,minimum size=8mm] (C2) at (-3,0) {$C_2$};
\node[circle,draw=figred,very thick,fill=figred!7,minimum size=8mm] (C3) at (-3,-1.4) {$C_3$};
\draw[figblue,thick] (C1) to[bend left=10] (F);
\draw[figblue,thick] (C1) to[bend right=10] (F);
\draw[figblue,thick] (C2) -- (F);
\draw[figblue,thick] (C3) to[bend left=12] (F);
\draw[figblue,thick] (C3) to[bend right=12] (F);
\node at (2.8,0) {$g(C)=\sum g(C_v)+b_1(\Gamma)$};
\end{tikzpicture}
\smallskip
{\small The gluing graph records multiple intersections; parallel edges
create genus in the summed surface.}
\end{center}
The construction can also be carried out simultaneously for several disjoint surfaces.  Suppose the surfaces
$C_1,\ldots,C_r$ are partitioned into nonempty sets
$P_1,\ldots,P_s$.  Assume that the second summand contains symplectic connector surfaces
$F_1,\ldots,F_s$ which are pairwise disjoint away from $\Sigma'$ and whose
intersection points with $\Sigma'$ are mutually distinct, where
\[
 F_j\cdot \Sigma'=\sum_{i\in P_j}d_i.
\]
Choose the matching so that the punctures of $F_j$ are glued exactly to those
of the $C_i$ with $i\in P_j$.
\begin{corollary}[Simultaneous surface summing]\label{cor:simult}
Under the preceding hypotheses, and for surface-compatible matchings chosen
separately for the blocks $P_j$, the fiber sum contains pairwise disjoint
symplectic surfaces $D_1,\ldots,D_s$ with
\begin{align*}
 D_j^2&=F_j^2+\sum_{i\in P_j}C_i^2,\\
 g(D_j)&=g(F_j)+\sum_{i\in P_j}g(C_i)
       +\sum_{i\in P_j}d_i-|P_j|.
\end{align*}
\end{corollary}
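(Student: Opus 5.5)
The plan is to apply Theorem~\ref{thm:main} blockwise, using the locality clause at the end of that theorem to see that the $s$ constructions do not interfere. First I fix, for each block $P_j$, a bijection between the $m_j=\sum_{i\in P_j}d_i$ points of $\bigcup_{i\in P_j}C_i\cap\Sigma$ and the $m_j$ points of $F_j\cap\Sigma'$. Because the points of $C_i\cap\Sigma$ are all distinct, and the points of $F_j\cap\Sigma'$ are mutually distinct across $j$, these block matchings combine into one bijection between the $m=\sum_j m_j$ marked points on $\Sigma$ and the $m$ marked points on $\Sigma'$. I then build a single surface-compatible gluing realizing this global matching, following the recipe after the Definition. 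Choose an orientation-preserving diffeomorphism $\Sigma\to\Sigma'$ that carries each marked point to its partner; this is possible for any finite point sets of equal cardinality, since the diffeomorphism group of a connected surface acts $m$-transitively. Lift it to a fiber-orientation-reversing isomorphism of the normal circle bundles. Finally, apply local fiber rotations supported in disjoint small disks around the marked points to match the product framings. Since these disks are disjoint, the framing corrections for different blocks are independent.

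With this gluing fixed, I would look at each block separately. For fixed $j$, the surfaces $\{C_i\}_{i\in P_j}$ together with $F_j$ satisfy exactly the hypotheses of Theorem~\ref{thm:main}, and the chosen gluing is surface-compatible for their matching. The theorem therefore produces a connected embedded symplectic surface $D_j$. It also gives the genus and square formulas with $r=|P_j|$ and $m=m_j$, which are precisely the stated formulas. The surfaces $C_i$ with $i\notin P_j$, and $F_k$ with $k\neq j$, lie away from the neighborhoods used for block $j$, apart from their own disjoint neck regions. So the locality clause of Theorem~\ref{thm:main} says the construction for block $j$ leaves them untouched. Carrying out the smoothing for all blocks at once in disjoint local models therefore gives all $D_j$ simultaneously in the same $Y$.

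It remains to check that the $D_j$ are pairwise disjoint. Away from the summing neighborhoods, $D_j$ consists of pieces of $C_i$ for $i\in P_j$ and of $F_j$. These pieces are disjoint from those of $D_k$ for $k\ne j$, because the $C_i$ are pairwise disjoint off $\Sigma$ and the $F_j$ are pairwise disjoint off $\Sigma'$. Inside the neck region, each neck of $D_j$ lies over a small disk around its own marked point. The marked points are distinct, so these disks can be taken pairwise disjoint, and hence the necks of different $D_j$ do not meet. The main point requiring care is this first step: one global gluing map has to be surface-compatible for every block at once. I expect it to go through because all the adjustments are local and supported near distinct points.
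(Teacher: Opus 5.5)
Your proposal is correct and takes essentially the same route as the paper: the corollary is obtained by applying Theorem~\ref{thm:main} blockwise under a single surface-compatible gluing, and the paper justifies disjointness only by the remark that ``disjointness is built into the choice of the connector surfaces and punctures.'' You make explicit what the paper leaves implicit, namely that one gluing map realizes all the block matchings at once and that the $D_j$ are disjoint because the $C_i$, the $F_j$, and the necks over distinct marked points are pairwise disjoint.
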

If the resulting squares are negative, we obtain several disjoint negative surfaces in the same fiber sum. Disjointness is built into the choice of the connector surfaces and punctures.
\section{Recovery of the sphere-summing construction}
We recover the construction of \cite{AZ} by taking the summing surface to be
a torus.  In the original construction, the first summand is an elliptic
surface $E(n)$ and the summing torus is a regular elliptic fiber
$T\subset E(n)$.  The second summand is $\mathbb{T}^2\times \mathbb{S}^2$, containing a connected
symplectic $p$-fold braided torus $T_p$ representing
$p[\mathbb{T}^2\times\{\mathrm{pt}\}]$.  If
\[
 S_1,\ldots,S_p
\]
are disjoint sphere sections of $E(n)$, then each $S_i$ meets $T$ transversely
once.  On the second side the sphere
\[
 F=\{\mathrm{pt}\}\times S^2
\]
meets $T_p$ in $p$ points.  After removing neighborhoods of $T$ and $T_p$,
the $p$ punctured sections are joined to the $p$ punctures of $F$ across the
fiber-sum neck.  Applying Theorem~\ref{thm:main} with
$r=m=p$, $d_i=1$, and $g(S_i)=g(F)=0$ gives
\[
 g(C)=0,\qquad C^2=\sum_{i=1}^p S_i^2.
\]
Thus the output is again a symplectic sphere.  In particular, since a section
of $E(n)$ has square $-n$, $p$ such sections give a sphere of square $-pn$.
There is a direct higher-genus analogue of the first summand.  Let
\[
 f_g\colon X(g,1)=\mathbb{CP}^2\#(4g+5)\overline{\mathbb{CP}}^{\,2}
 \longrightarrow S^2
\]
be the standard hyperelliptic genus-$g$ Lefschetz fibration.  It admits
$4g+4$ disjoint $(-1)$-sphere sections \cite{Tanaka}.  Forming the fiber sum
of $n$ copies along regular genus-$g$ fibers,
\[
 X(g,n)=\underbrace{X(g,1)\#_{\Sigma_g}\cdots
 \#_{\Sigma_g}X(g,1)}_{n\ \mathrm{copies}},
\]
and gluing corresponding sections gives $4g+4$ disjoint sphere sections of
square $-n$.  Hence $X(g,n)$ plays the role of $E(n)$ in higher genus. If the other summand contains a genus-$g$ summing surface with a connector meeting it in $p$ positive points, the same construction joins $p$ sections to a symplectic sphere of square $-pn$.
 In recent work with S\"umeyra Sakall{\i}, we
study exceptional sections, braided bisections, negative spheres in fiber
sums of G\"urta\c{s} Lefschetz fibrations, and their use in constructions of
exotic symplectic $4$-manifolds \cite{ASGurtas}.
\begin{corollary}[Akhmedov--Zhang sphere summing (2012)]\label{cor:AZ}
The sphere-summing surface in \cite{AZ} is the $d_i=1$, genus-zero case of
Theorem~\ref{thm:main}.  For the particular braided torus and gluing used in
\cite{AZ}, the resulting manifold has fundamental group $\Z_p$ under the
complement hypotheses imposed there.
\end{corollary}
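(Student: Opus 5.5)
The corollary has two parts, and I will treat them separately. The first part is an identification. In \cite{AZ} the surfaces $C_i=S_i$ are disjoint sphere sections of $E(n)$. Each meets the regular fiber $T$ transversely and positively in exactly one point, so $d_i=1$ and $g(C_i)=0$. The connector is $F=\{\mathrm{pt}\}\times S^2\subset \mathbb{T}^2\times\mathbb{S}^2$. It meets the braided torus $T_p$ in $p$ distinct positive points, because $T_p$ is a $p$-fold braid transverse to the sphere fibers. Hence $m=r=p$ and $g(F)=0$, and $F^2=0$. I will then check that the gluing used in \cite{AZ} is surface-compatible in the sense of our Definition. The gluing in \cite{AZ} is a fiber-orientation-reversing identification of normal circle bundles. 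It sends the puncture of each $S_i$ to a puncture of $F$, and product framings can be matched by the local fiber rotations described before Theorem~\ref{thm:main}. Theorem~\ref{thm:main} then gives $g(C)=0+0+p-p=0$ and $C^2=\sum S_i^2=-pn$. This is exactly the sphere of \cite{AZ}, as in the computation preceding the corollary.

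The second part is the fundamental group claim. Theorem~\ref{thm:main} says nothing about $\pi_1$, as the Remark after it stresses, so this part has to be taken from \cite{AZ}. I would run a Seifert--van Kampen argument on $Y=(E(n)\setminus\nu T)\cup(\mathbb{T}^2\times\mathbb{S}^2\setminus\nu T_p)$, glued along the $3$-manifold $T^3$ boundary.

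The first piece is $E(n)\setminus\nu T$. This complement is simply connected because the sections $S_i$ kill the meridian of $T$. It is also well known that the fiber generators die in the complement via vanishing cycles. So everything coming from this side is trivial.

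The second piece is the complement of $T_p$ in $\mathbb{T}^2\times\mathbb{S}^2$. Its $\pi_1$ is generated by the torus directions together with the meridian $\mu$ of $T_p$, with relations determined by the braid monodromy. Under the gluing, the boundary torus classes of $T_p$ are identified with the fiber classes of $T$, which are killed. The meridian of $T_p$ is identified with the meridian of $T$, which is also killed. What survives should be the class detected by the $p$-fold covering $T_p\to \mathbb{T}^2$. This is where $\Z_p$ comes from: the $\mathbb{T}^2$ generators in the complement become $p$-th roots of boundary loops of $T_p$.

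The main obstacle is this quotient computation. To carry it out I would take the explicit braid from \cite{AZ}, presumably with monodromy a $p$-cycle along one direction and trivial along the other. I would write a presentation of $\pi_1(\mathbb{T}^2\times\mathbb{S}^2\setminus T_p)$ as an extension of $\pi_1(\mathbb{T}^2)$ by the free group on the punctured-sphere meridians. I would then impose the boundary identifications and show that the result is cyclic of order $p$. Since the corollary explicitly defers to the complement hypotheses of \cite{AZ}, I would simply cite that computation rather than redo it.
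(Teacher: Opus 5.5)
Your proposal matches the paper's argument: the identification is the substitution $r=m=p$, $d_i=1$, $g(S_i)=g(F)=0$ into Theorem~\ref{thm:main}, and the paper likewise does not prove the $\Z_p$ claim but defers to the explicit van Kampen computation in \cite{AZ}. Your sketch of that computation is consistent with it, though it goes beyond what the paper proves. The sketch runs as follows: $E(n)\setminus\nu T$ is simply connected, so $\pi_1$ is the group of the complement of $T_p$ modulo the boundary $T^3$, and the braid-direction loop becomes a $p$-th root of the longitude. One small correction: only that braid-direction generator becomes a $p$-th root, since the other torus generator is itself a boundary loop and is simply killed.
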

\begin{figure}[ht]
\centering
\begin{tikzpicture}[x=1cm,y=1cm,>=Latex,font=\small,
                    line cap=round,line join=round]
\node[font=\bfseries] at (-3.55,2.20) {$X\setminus\nu T$};
\node[font=\bfseries] at (2.75,2.20) {$T^2\times S^2\setminus\nu T_p$};

\draw[figblue!70,very thick,dashed] (0,-1.65)--(0,1.65);
\node[figblue!80!black,font=\scriptsize,fill=white,inner sep=1.5pt]
      at (0,1.86) {fiber-sum boundary};

\foreach \y/\lab in {1.18/$S_1^\circ$,.40/$S_2^\circ$,-.40/$\vdots$,-1.18/$S_p^\circ$}{
  \draw[figred,very thick,fill=figred!6]
    (-5.05,\y) .. controls (-4.92,\y+.30) and (-4.55,\y+.36) .. (-4.12,\y+.30)
    -- (-1.35,\y+.14)
    .. controls (-1.20,\y+.13) and (-1.10,\y+.08) .. (-1.10,\y)
    .. controls (-1.10,\y-.08) and (-1.20,\y-.13) .. (-1.35,\y-.14)
    -- (-4.12,\y-.30)
    .. controls (-4.55,\y-.36) and (-4.92,\y-.30) .. (-5.05,\y) -- cycle;
  \node[figred!85!black] at (-3.95,\y) {\lab};
  \draw[figred,very thick] (-1.10,\y-.12) arc[start angle=-90,end angle=90,radius=.12];
  \draw[figblue!65,thick] (-.98,\y)--(-.18,\y);
}

\draw[figgreen,very thick,fill=figgreen!6]
  (1.45,1.56)
  .. controls (2.10,1.95) and (3.34,1.88) .. (3.95,1.24)
  .. controls (4.55,.62) and (4.55,-.62) .. (3.95,-1.24)
  .. controls (3.34,-1.88) and (2.10,-1.95) .. (1.45,-1.56)
  .. controls (1.18,-1.40) and (1.04,-1.12) .. (1.04,-.92)
  -- (1.04,.92)
  .. controls (1.04,1.12) and (1.18,1.40) .. (1.45,1.56) -- cycle;

\foreach \y in {1.18,.40,-.40,-1.18}{
  \draw[figgreen,very thick] (1.04,\y-.12) arc[start angle=270,end angle=90,radius=.12];
  \draw[figblue!65,thick] (.18,\y)--(.92,\y);
  \draw[->,figblue,thick] (-.42,\y)--(.42,\y);
}
\node[figgreen!65!black,align=center] at (2.85,0)
      {$F^\circ=F\setminus\{p\text{ disks}\}$\\[2pt]
       $F=\{\mathrm{pt}\}\times S^2$};

\node[draw=black!20,rounded corners=2pt,fill=black!2,
      inner xsep=10pt,inner ysep=4pt] at (0,-2.15)
      {$S_1^\circ\cup\cdots\cup S_p^\circ\cup F^\circ
        \;\longrightarrow\; C,\qquad
        g(C)=0,\quad C^2=\sum_{i=1}^p S_i^2$};
\end{tikzpicture}
\caption{Sphere summing.  The punctured sections $S_i^\circ$ are matched to the boundary components of the punctured connector $F^\circ$ across the fiber-sum neck.  The resulting closed surface is the symplectic sphere $C$, with $g(C)=0$ and $C^2=\sum_{i=1}^p S_i^2$.}
\label{fig:summing-spheres}
\end{figure}
The details of fundamental-group computation can be found in \cite{AZ}. Note that the cyclic fundamental group is obtained from an explicit van Kampen
calculation for the chosen braided torus.  A different braid or gluing map can change the ambient fundamental group without changing the genus or square of
the summed surface. When the $S_i$ are exceptional spheres, the construction has the following immediate consequence.
\begin{example}\label{ex:minus-p}
If $S_1,\ldots,S_p$ are disjoint $(-1)$-spheres meeting $T$ once, sphere
summing produces a symplectic $(-p)$-sphere.  This recovers the basic
mechanism used in \cite{AZ} to obtain $(-4)$, $(-5)$, and $(-6)$-spheres
before rational blowdown.  For example, four $(-1)$-sections were summed to
a $(-4)$-sphere; rationally blowing it down gave a symplectic $4$-manifold
with $\pi_1=\mathbb Z_4$ and $c_1^2=1$.  Starting instead with eight
$(-1)$-sections, divided into two groups of four, gave two disjoint
$(-4)$-spheres, and rationally blowing down both produced an example with
$\pi_1=\mathbb Z_4$ and $c_1^2=2$.  The same construction also gives examples
with $\pi_1=\mathbb Z_6$ and $c_1^2=3$.
\end{example}
More generally, if the $C_i$ are spheres of squares $-a_i$ and meet $T$ once,
then their sum is a sphere of square $-\sum a_i$.  Thus a collection of mild
negative classes can be consolidated into a single class of larger negative
square without changing $e$ or $\sigma$ of the ambient manifold when the
second summand is $T^2\times S^2$.

\section{Negative configurations and rational blowdown}

We now use the sphere-summing construction to produce the negative
configurations needed for rational blowdown \cite{FSRB}.  Suppose that $X$ contains
disjoint exceptional spheres
\[
E_1,\ldots,E_N,
\]
each meeting the square-zero torus $T$ once.  Partition these spheres into
blocks of sizes $p_1,\ldots,p_s$, where $\sum_{j=1}^s p_j=N$.  If the second summand
contains disjoint connector spheres meeting the summing torus in $p_j$ points,
Corollary~\ref{cor:simult} gives disjoint symplectic spheres
\[
D_j^2=-p_j.
\]

Intersections away from the summing torus are unchanged. Thus a sphere disjoint from $T$ survives, with its intersections with the $E_i$ transferred to the corresponding $D_j$. In this way the original incidence data in $X$ gives linear or star-shaped negative configurations.

\begin{proposition} Let $\mathcal C$ be a symplectic sphere configuration in $X$, and suppose
that its intersections with $E_1,\ldots,E_N$ occur away from $T$.  After a
surface-compatible simultaneous sphere sum of the $E_i$ in blocks, the
components of $\mathcal C$ disjoint from the summing neighborhood survive.
A block of $p$ exceptional spheres is replaced by a sphere of square $-p$,
and its algebraic intersection number with each surviving component is the sum of the
corresponding intersection numbers of the spheres in that block.
\end{proposition}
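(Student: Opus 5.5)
The plan is to deduce the statement from Corollary~\ref{cor:simult}, applied with the summing torus $T$, the relative surfaces $C_i=E_i$ (so $d_i=1$ and $g(C_i)=0$), and the block partition $P_1,\ldots,P_s$. The connector spheres $F_j\subset Z$ are assumed to have square zero, as for $\{\mathrm{pt}\}\times S^2$ in $T^2\times S^2$. Then the corollary gives pairwise disjoint spheres $D_j$ with
\[
g(D_j)=0+0+p_j-p_j=0,\qquad D_j^2=0+\sum_{i\in P_j}E_i^2=-p_j .
\]
This settles the claim that each block is replaced by a sphere of square $-p$.

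Next I will address the survival of $\mathcal C$. The summing neighborhood $\nu T$ can be taken arbitrarily small. So every component $K$ of $\mathcal C$ disjoint from $T$ also avoids $\nu T$. By hypothesis, the points of $K\cap E_i$ lie away from $T$, hence outside $\nu T$ after shrinking. The symplectic sum $Y=(X\setminus\nu T)\cup(Z\setminus\nu T')$ contains $X\setminus\nu T$ symplectically, since the sum construction only modifies the form in a collar of the neck. Therefore $K$ persists as a symplectic sphere in $Y$ with the same square. This is exactly the locality clause of Theorem~\ref{thm:main}.

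For the intersection numbers, I will argue as follows. By construction, $D_j\cap(X\setminus\nu T)=\bigcup_{i\in P_j}(E_i\setminus\nu T)$. The remaining part of $D_j$ is $F_j^\circ$ together with the necks, and it lies in $Z\setminus\nu T'$ or in the neck region. That region is disjoint from $K$. Hence the geometric intersection $K\cap D_j$ is the disjoint union of the sets $K\cap E_i$ for $i\in P_j$. The local intersection signs are unchanged, because orientations on $X\setminus\nu T$ are preserved. Summing the signs gives
\[
K\cdot D_j=\sum_{i\in P_j}K\cdot E_i .
\]
If the original intersections are transverse and positive, the $D_j$ and the surviving components again form a symplectic configuration, which is the linear or star-shaped plumbing described in the text.

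The main point needing care is the claim that the perturbation of the symplectic form near the neck, and the local isotopies in the proof of Theorem~\ref{thm:main}, can be confined to a neighborhood of $T$ that misses $\mathcal C\setminus(\text{components meeting }T)$ and the points of $\mathcal C\cap E_i$. I will handle this by choosing $\nu T$ first to be smaller than the distance from $T$ to these compact sets. Any remaining questions about the sign or homological meaning of the intersection count can be settled by computing $K\cdot D_j$ via transverse intersection points, since $K$ and $D_j$ are embedded and transverse.
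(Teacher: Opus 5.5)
Your proposal is correct and is essentially the paper's proof. The square $-p$ comes from Corollary~\ref{cor:simult} applied with $d_i=1$ and genus-zero, square-zero connectors $F_j$; the paper leaves the square-zero assumption implicit and you rightly state it. The intersection formula comes from the fact that all intersections of $\mathcal C$ with the $E_i$ lie outside the neighborhood removed in the fiber sum, so the gluing leaves them unchanged and they become intersections with the summed sphere $D_j$. Your extra care about confining the local isotopies and form adjustments to a shrunken $\nu T$ just spells out what the paper takes for granted.
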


\begin{proof}
The self-intersection calculation follows from
Corollary~\ref{cor:simult}.  Since all relevant intersections occur outside
the tubular neighborhood removed in forming the fiber sum, they are
unchanged by the gluing and become intersections with the summed sphere.
\end{proof}

For explicit examples of this construction and the resulting rational
blowdowns, we refer to our earlier work \cite{AZ}. Further examples arising from higher-genus Lefschetz fibrations and Smith's braided surfaces are developed in \cite{ASGurtas}.

\section{Braided tori and higher-genus braided surfaces}
We recall the braided torus used in the original construction.
Write a tubular neighborhood of the product torus as
\[
             \nu T=T^2\times D^2=S^1_x\times(S^1_y\times D^2).
\]
Let \(B\subset S^1_y\times D^2\) be a connected closed \(m\)-strand
braid.  Equivalently, the projection
\[
             B\longrightarrow S^1_y
\]
has degree \(m\).  Taking the product with the remaining circle gives
\[
             T_B=S^1_x\times B\subset T^2\times D^2.
\]
Thus the projection \(T_B\to T^2=S^1_x\times S^1_y\) has degree \(m\), and
\[
             [T_B]=m[T]\in H_2(T^2\times D^2).
\]
For a braid positively transverse to the disk fibers, the product torus
\(T_B\) is symplectic after choosing the standard product symplectic form.
This is the braided-torus construction used by Fintushel and Stern
\cite{FSsurf}.  Braided surfaces also appeared in earlier joint work with B.~Doug Park
\cite{APodd}.  There a connected two-string braid
$\beta\subset D^2\times S^1$ gives the symplectic torus
\[
T_\beta=\beta\times S^1\subset T^4,\qquad
[T_\beta]=2[\alpha_3\times\alpha_4].
\]
The torus $T_\beta$ meets $\alpha_1\times\alpha_2$ in two points; resolving
one intersection and blowing up the remaining double point produces the
genus-two symplectic surface used in the constructions of exotic
$\mathbb{CP}^2\#2\overline{\mathbb{CP}}^{\,2}$,
$\mathbb{CP}^2\#4\overline{\mathbb{CP}}^{\,2}$, and other small exotic
$4$-manifolds.
In this construction $T_B$ is the \emph{summing surface}. After placing it in $T^2\times S^2$, the transverse sphere
\[
             F=\{x_0\}\times S^2
\]
meets \(T_B\) positively in exactly \(m\) points.  Removing a neighborhood of
\(T_B\) therefore turns \(F\) into a sphere with \(m\) boundary components.
Its boundary circles are matched with those of the punctured surfaces on the $X$-side. In \cite{AZ}, these are $m$ spheres meeting the summing torus once, so the punctured sphere $F$ joins them to a single sphere. If the connector has genus $h$ instead, Theorem~\ref{thm:main} gives
\[
             g(C)=h+\sum_{i=1}^r g(C_i)+m-r,
\]
with the same additivity for the normal Euler number. The case $h=0$ and $r=m$ is the original sphere-summing situation.

For higher genus we use Smith's surfaces. Proposition~1.2 of
\cite{SmithSurface} says that, for every
\(h\geq 1\) and \(k\geq 2\), the class
\[
             2k[\Sigma_h]\in H_2(\Sigma_h\times S^2;\mathbb Z)
\]
has infinitely many pairwise non-isotopic connected symplectic
representatives
\[
             B_{h,k,a}\subset \Sigma_h\times S^2 ,
             \qquad a\in\mathbb Z,
\]
each of genus
\[
             g(B_{h,k,a})=1+2k(h-1).
\]
These surfaces have square zero.  Moreover the sphere fiber
\[
             F=\{x\}\times S^2
\]
meets \(B_{h,k,a}\) positively in \(2k\) points, since
\([B_{h,k,a}]=2k[\Sigma_h]\).
Theorem~\ref{thm:main} gives the following.
\begin{proposition}
\label{prop:smith-summing}
Fix \(h\geq1\) and \(k\geq2\), and let
\[
                     r=1+2k(h-1).
\]
Let \(X\) contain a square-zero symplectic surface
\(\Sigma\) of genus \(r\), together with \(2k\) symplectic spheres
\(S_1,\ldots,S_{2k}\), pairwise disjoint away from \(\Sigma\), each meeting
\(\Sigma\) transversely and positively once.  For any Smith representative
\(B_{h,k,a}\subset\Sigma_h\times S^2\), let us form a surface-compatible symplectic
sum
\[
 Y_a=X\#_{\Sigma=B_{h,k,a}}(\Sigma_h\times S^2).
\]
Then \(Y_a\) contains a symplectic sphere \(C_a\) with self-intersection
\[
                     C_a^2=\sum_{i=1}^{2k} S_i^2.
\]
In particular, if all \(S_i\) are exceptional spheres, then
\[
                     C_a^2=-2k.
\]
\end{proposition}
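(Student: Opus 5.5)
The plan is to apply Theorem~\ref{thm:main} directly, with $\Sigma\subset X$ as the summing surface on the first side and $\Sigma'=B_{h,k,a}\subset Z=\Sigma_h\times S^2$ on the second. The relative surfaces are $C_i=S_i$ for $i=1,\ldots,2k$, each with $d_i=1$. The connector is the sphere fiber $F=\{x\}\times S^2$.

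First I will check the hypotheses of the theorem. The genera match: $g(\Sigma)=r=1+2k(h-1)=g(B_{h,k,a})$. The self-intersections are opposite: $\Sigma^2=0$, and $B_{h,k,a}^2=0$ because $[B_{h,k,a}]=2k[\Sigma_h]$ and $[\Sigma_h]^2=0$ in $\Sigma_h\times S^2$. So the normal bundles have opposite (zero) Euler class, and after the usual rescaling of symplectic forms to match areas the symplectic sum is defined. The $S_i$ satisfy conditions (1)–(3) by hypothesis, and $m=\sum d_i=2k$.

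The main step, which I expect to be the only real obstacle, is ensuring that $F$ meets $B_{h,k,a}$ transversely and positively in exactly $2k$ distinct points, not merely with algebraic intersection $2k$. For this I will use the structure of Smith's surfaces: they are braided, i.e.\ the projection $B_{h,k,a}\to\Sigma_h$ is a branched cover of degree $2k$ whose branch points are simple, and the surface is positively transverse to the $S^2$-fibers away from the branch points. I will choose $x\in\Sigma_h$ to be a regular value of this projection. Then $\{x\}\times S^2$ meets $B_{h,k,a}$ in $2k$ distinct transverse points, each positive because the surface is symplectic and transverse to the fiber. Since $F$ is a symplectic sphere in the product form, all hypotheses on $F$ hold. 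If this local description of Smith's surfaces were unavailable, I would instead perturb $F$ within the fiber class and argue that every geometric intersection of symplectic surfaces in a compatible almost complex structure is positive, so the count equals $2k$.

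Finally, I will choose a surface-compatible gluing, which the paper's construction after the Definition provides: a diffeomorphism $\Sigma\to B_{h,k,a}$ sending the $2k$ points $S_i\cap\Sigma$ to the $2k$ points $F\cap B_{h,k,a}$, lifted to the circle bundles, with framings adjusted by local rotations. Theorem~\ref{thm:main} then gives a connected embedded symplectic surface $C_a$. Its genus is
\[
g(C_a)=g(F)+\sum_{i=1}^{2k}g(S_i)+m-r = 0+0+2k-2k=0,
\]
where the $r$ in Theorem~\ref{thm:main} counts relative surfaces and equals $2k$ here, not the genus $r$ of the statement. Its square is
\[
C_a^2=F^2+\sum_i S_i^2=\sum_i S_i^2,
\]
since $F^2=0$. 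If all the $S_i$ are exceptional spheres, this gives $C_a^2=-2k$.
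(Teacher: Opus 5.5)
Your proposal is correct and is essentially the paper's proof: take the connector $F=\{x\}\times S^2$, which meets $B_{h,k,a}$ in $2k$ positive points, match its $2k$ boundary circles with those of the $S_i$, and apply Theorem~\ref{thm:main} with $d_i=1$ and $2k$ relative surfaces to get $g(C_a)=0$ and $C_a^2=F^2+\sum_i S_i^2=\sum_i S_i^2$. One small correction: two transverse symplectic surfaces need not meet positively, so positivity must come from elsewhere, either from the braided structure (with $g=1+2k(h-1)$, Riemann--Hurwitz forces the projection $B_{h,k,a}\to\Sigma_h$ to be unbranched, so every $x$ is a regular value) or from the fact that $2k$ transverse points whose signs sum to $F\cdot B_{h,k,a}=2k$ must all be positive.
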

\begin{proof}
The sphere fiber $F=\{x\}\times S^2$ meets $B_{h,k,a}$ in $2k$ positive points. After deleting the summing neighborhoods, $F$ has $2k$ boundary circles and each $S_i$ has one. Matching these circles and applying Theorem~\ref{thm:main} gives
\[
 g(C_a)=0+\sum_{i=1}^{2k}0+2k-2k=0
\]
and
\[
 C_a^2=F^2+\sum_{i=1}^{2k}S_i^2
       =\sum_{i=1}^{2k}S_i^2,
\]
since \(F^2=0\).
\end{proof}
Thus the genus of the summing surface may be arbitrarily large while the connector remains a sphere. Smith's construction supplies infinitely many non-isotopic summing surfaces in the same homology class, and the above calculation is unchanged. We make no claim that the corresponding $Y_a$, or the spheres $C_a$, are pairwise distinct; that question depends on the complements and the gluing maps.
\par\vspace{0.35\baselineskip}

\par\vspace{0.8\baselineskip}
\noindent\textbf{Acknowledgment.}\enspace The sphere-summing construction first appeared in joint work with Weiyi
Zhang in 2012. I am grateful to Weiyi Zhang, Burak Ozbagci, and S\"umeyra
Sakall{\i} for the collaborations discussed above \cite{AZ,AO,ASGurtas}.
Several of the ideas in this note go back to earlier work and are taken up here again after an interruption. Discussions arising from my collaboration with Sakall{\i} on G\"urta\c{s} Lefschetz fibrations helped bring me back to these questions. An LLM-based tool was used to assist with the preparation of portions of the text, including grammar and language editing, and with the generation of the figures. All mathematical content is due to the author, who takes full intellectual responsibility for the content of this paper.

\end{document}